\newcommand{\addresseshere}{%
  \enddoc@text\let\enddoc@text\relax
}
\DeclareMathOperator{\lcm}{lcm}
\newtheorem{theorem}{Theorem}[section]
\newtheorem{question}[theorem]{Question}
\newtheorem{lemma}[theorem]{Lemma}
\newtheorem{prop}[theorem]{Proposition}
\title{Permutations and the divisor graph of $[1,n]$}
\author{Nathan McNew }
\address{Department of Mathematics, Towson University, Towson, MD 21252, USA}
\email{nmcnew@towson.edu}
\begin{document}

\begin{abstract}
    Let $S_{\rm div}(n)$ denote the set of permutations $\pi$ of $n$ such that for each $1\leq j \leq n$ either $j \mid \pi(j)$ or $\pi(j) \mid j$.  These permutations can also be viewed as vertex-disjoint directed cycle covers of the divisor graph $\mathcal{D}_{[1,n]}$ on vertices $v_1, \ldots, v_n$ with an edge between $v_i$ and $v_j$ if $i\mid j$ or $j \mid i$.  We improve on recent results of Pomerance by showing $c_d = \lim_{n \to \infty }\left(\# S_{\rm div}(n)\right)^{1/n}$ exists and that $2.069<c_d<2.694$.  We also obtain similar results for the set $S_{\rm lcm}(n)$ of permutations where $\lcm(j,\pi(j))\leq n$ for all $j$.  The results rely on a graph theoretic result bounding the number of vertex-disjoint directed cycle covers, which may be of independent interest.
\end{abstract}
\maketitle

\section{Introduction}
Several recent papers have investigated permutations $\pi$ of the integers $[1,n]=\{1,2,\ldots n\}$ where the value $\pi(i)$ is related to the index $i$ in an arithmetically interesting way.  

In \cite{pom1} Pomerance introduces the \textit{coprime permutations} (permutations satisfying the condition $\gcd(i,\pi(i))=1$ for all $1\leq i \leq n$). He shows, for large $n$, that the count of such permutations is bounded between $n!/c_1^n$ and $n!/c_2^n$ with $c_1 = 3.73$ and $c_2 = 2.5$. That paper also includes a conjecture made by the author of this paper that the count of co-prime permutations of $n$, as $n \to \infty$, is $n!/c_o^{n+o(n)}$ where $c_o = 
\prod_{p}
\frac{p(p - 2)^{1-2/p}}{(p - 1)^{2-2/p}} =  2.65044\ldots.$
This conjecture was proven a few days later in a paper by Sah and Sawhney \cite{SS}. 

Coprime permutations are a special case of coprime mappings, bijective maps $f:~A\to B$ between two sets of integers with the property that $\gcd(a,f(a))=1$ for every $a \in A$.  Coprime mappings have been considered by many authors.  Newman conjectured that there always exists a coprime mapping from $[1,n]$ to any set of $n$ consecutive integers.  This was proven by Pomerance and Selfridge \cite{CS80}. The special case of coprime mappings from $[1,n]$ to $[n+1,2n]$ was proved in \cite{DB63} (see also \cites{C71,LLP17}).  More recently, coprime mappings have been shown to be related to the lonely runner conjecture \cites{BP22,Pom22}. 

The same paper \cite{pom1} that introduced the coprime permutations suggested other interesting arithmetic constraints that could be imposed on permutations.  One example is the set $S_{\rm div}(n)$ of permutations $\pi$ of $n$ where, for each index $i$, either $i\mid \pi(i)$ or $\pi(i)\mid i$; another is the set  $S_{\rm lcm}(n)$ of permutations where $\lcm(i,\pi(i)) \leq n$ for all $i$.  (Notice that $S_{\rm div}(n) \subseteq S_{\lcm}(n)$.)  A table with exact values of the counts of $\# S_{\rm div}(n)$ and  $\# S_{\rm lcm}(n)$ up to $n=50$ is provided in Table \ref{tab:exactvals} in the Appendix, along with the corresponding values of the $n$-th roots of these counts.  

Pomerance revisits these two sets of permutations 
in \cite{pom2}.  In each case he shows that the count of these permutations grows geometrically. In particular, for large $n$, $$ 1.9364^n \leq \# S_{\rm div}(n) < \# S_{\rm lcm}(n)  \leq 13.5597^n. $$ A heuristic argument is given that $\# S_{\rm lcm}(n) \leq 8.3830^n$, though data suggests even this bound is substantially greater than the truth.  He also gives the bounds $\# S_{\rm lcm}(n) \geq 2.1335^n$  and shows that $\# S_{\rm lcm}(n)/\# S_{\rm div}(n)> 1.00057^n$, both valid for large $n$.

In this paper, we show there exist constants $c_d$ and $c_l$ with $\#S_{\rm div}(n) = c_d^{n+o(n)}$ and $ \#S_{\rm lcm}(n) = c_l^{n+o(n)}$ as $n$ goes to infinity.  The method immediately gives the improved upper bounds $c_d < 3.31369$ and $c_l  \leq 6.60740$ without significant additional computation, but also gives an algorithm that can compute the constants to arbitrary precision, which we use to improve these upper bounds further in Section \ref{sec:numerics}.   Further ideas, discussed in Section \ref{sec:lowbds}, combined with the methods of \cite{pom2} allow us to similarly improve the lower bounds, resulting in the bounds
\[ 2.06912 < c_d < 2.69390 \ \text{ and } \ 2.30136 < c_l < 3.36352.\]

The key idea is a graph-theoretic result, bounding the ratio by which the count of the vertex-disjoint directed cycle covers of a graph can increase when a vertex is added, depending on the degree of that vertex.  This result may also be of independent interest.  A related question is posed in Section \ref{sec:question}.

\section{Notation and statement of results}

It will be convenient to consider the permutations $\pi$ in $S_{\rm div}(n)$ as vertex-disjoint directed cycle covers of the divisor graph $\mathcal{D}_{[1,n]}$ consisting of vertices $v_1, v_2, \ldots v_n$, and an edge between $v_i$ and $v_j$ if $i\mid j$ or $j \mid i$.  Every vertex in the graph is treated as having a self-loop.  In the context of this graph, permutations in $S_{\rm div}(n)$ are in bijection with vertex-disjoint directed cycle covers of $\mathcal{D}_{[1,n]}$. (The permutation $\pi \in S_{\rm div}(n)$ is associated with the cycle cover containing all directed edges of the form $i \to \pi(i)$.)

To account for fixed points and transpositions in the permutation, we allow cycles of any positive length in our cycle covers. In particular, cycles of length 1 (a single vertex involving only a loop of $\mathcal{D}_{[1,n]}$) and 2 (two vertices connected by an edge, in which case that edge is used two times) are allowed.  In more generality, we write $\mathcal{D}_{[k,n]}$ for the divisor graph consisting of integers from the interval $[k,n]$, and $S_{\rm div}([k,n])$ for the permutations restricted to that interval.

In the same way, we let  $\mathcal{L}_{[k,n]}$ be the graph consisting of integers from the interval $[k,n]$ with an edge between vertices $v_i$ and $v_j$ if $\lcm(i,j)\leq n$, and $S_{\rm lcm}([k,n])$ is the set of permutations subject to the same restriction.  Then again we can view permutations in $S_{\rm lcm}(n)$ as vertex-disjoint directed cycle covers of $\mathcal{L}_{[1,n]}$.

For an arbitrary graph $G$, let $C(G)$ denote the number of vertex-disjoint directed cycle covers of the graph $G$.  We will be interested in understanding how the count $G(G)$ grows as new vertices are added to the graph.  For a graph $G$ and a vertex $v$ of $G$ we define 
\[R(G,v) \vcentcolon = \frac{C(G)}{C(G-\{v\})}\]
to be the ratio of the number of vertex-disjoint directed cycle covers of $G$, divided by the number of such covers in the graph where the vertex $v$ has been removed (or alternatively cycle covers of $G$ in which we insist that the vertex $v$ be part of a cycle of length 1).

Intuitively, one would expect the ratio $R(G,v)$ to be larger for well connected vertices with high degree and smaller when the vertex $v$ has small degree.  It isn't obvious, however, that the ratio $R(G,v)$ would be bounded for all graphs $G$ and vertices $v$ having a fixed degree $d=d(v)$.  This somewhat surprising result is given by the following theorem.

\begin{theorem} \label{thm:graphbd}
Let $G$ be a graph with a self-loop on every vertex and $v$ a vertex of $G$ of degree $d=d(v)$, counting the self-loop once.  Then 
$$R(G,v) \leq 1 + \frac{d^2-d}{2}.$$
\end{theorem}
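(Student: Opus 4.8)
The plan is to pass to the cycle‑cover description and condition on the cycle through $v$. Writing
\[
C(G)=\sum_{Z\ni v}C\bigl(G-V(Z)\bigr),
\]
where $Z$ runs over all directed cycles of $G$ through $v$ and the length‑$1$ cycle (the self‑loop at $v$) contributes the term $C(G-v)$, the assertion becomes $\sum_{Z\ni v,\ |Z|\ge 2}C(G-V(Z))\le \tfrac{d^2-d}{2}C(G-v)$. First I would organize the cycles $Z$ of length $\ge 2$ by the edges at $v$ that they use: such a cycle uses either a single proper edge $va$ twice (a $2$-cycle $v\leftrightarrow a$) or two distinct proper edges $va,vb$ (length $\ge 3$). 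Recording a $2$-cycle by the pair $\{va,\text{loop}\}$ and a longer cycle by the pair $\{va,vb\}$ matches these edge–usages bijectively with the $\binom d2=\tfrac{d^2-d}2$ two‑element subsets of the $d$ edges incident to $v$, the loop‑containing subsets being exactly the $2$-cycles. Thus it suffices to prove a \emph{local} bound: for each such pair, the cycles realizing it account for at most $C(G-v)$ covers.

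For the $2$-cycle pairs this is immediate, since the covers whose $v$-cycle is $v\leftrightarrow a$ are precisely the covers of $G-\{v,a\}$, and $C(G-\{v,a\})\le C(G-v)$ (fixing the extra vertex $a$). So the theorem reduces to the Key Lemma: for distinct neighbours $a,b$ of $v$, $2\,Q_{a,b}\le C(G-v)$, where $Q_{a,b}$ is the number of ways to partition the vertices of $G-v$ into vertex‑disjoint directed cycles together with one extra directed path from $a$ to $b$. Here the factor $2$ is the two orientations of the $v$-cycle: reversing the path shows $Q_{a,b}=Q_{b,a}$.

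When $a\sim b$ I can prove this cleanly. Cutting the closing edge gives bijections identifying $Q_{a,b}$ with the covers $\pi$ of $G-v$ having $\pi(b)=a$, and $Q_{b,a}$ with those having $\pi(a)=b$; hence $2Q_{a,b}$ equals the number of covers of $G-v$ with $\pi(a)=b$ plus the number with $\pi(b)=a$. Let $U$ be the union of these two families of covers and $I$ their intersection, i.e.\ the covers in which $a\leftrightarrow b$ is a $2$-cycle. By inclusion–exclusion the sum above is $|U|+|I|$. Now $I$ is in bijection with the covers of $G-\{v,a,b\}$, which is \emph{also} the number of covers of $G-v$ fixing both $a$ and $b$; the latter lie in the complement of $U$ (there $\pi(a)=a\neq b$), so $|I|\le C(G-v)-|U|$ and therefore $|U|+|I|\le C(G-v)$, as desired.

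The hard part is the case $a\not\sim b$: the path from $a$ to $b$ cannot be closed into a cycle by a single edge, so no local ``splice out $v$'' move yields a valid cover of $G-v$, and in fact no purely local injection can succeed, since a path on its vertices can otherwise only be reassembled as unordered $2$-cycles, which destroys its orientation. I expect to resolve this by a non‑local argument: realize $Q_{a,b}$ as the number of near‑perfect matchings of the bipartite double cover of $G-v$ missing the two vertices dual to $a$ and $b$, and inject the two orientation classes into perfect matchings by augmenting along a canonically chosen alternating path between the two deficient vertices, the omnipresent self‑loops guaranteeing that such paths exist and, I hope, making the canonical choice reversible. Proving that this canonical augmentation is simultaneously injective on both orientation classes is the crux, and is where essentially all of the difficulty of the theorem concentrates.
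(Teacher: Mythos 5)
Your reduction is sound and essentially mirrors the paper's: you decompose $C(G)$ according to the pair of edges at $v$ used by its cycle, the $\binom{d}{2}$ pairs account for the $\frac{d^2-d}{2}$, and the theorem reduces to showing that each pair contributes at most $C(G-v)$ covers. Your treatment of the $2$-cycle pairs and your inclusion--exclusion argument for the case $a\sim b$ are both correct. But there is a genuine gap: the key lemma --- that $2Q_{a,b}\le C(G-v)$, equivalently that the number of covers whose $v$-cycle contains $a\to v\to b$ is at most $\tfrac12 C(G-v)$ --- is only proved when $a$ and $b$ are adjacent in $G$. The non-adjacent case is not an edge case; it is the generic and essential one (in the divisor graph, for instance, $v_1$ is adjacent to all large primes, no two of which are adjacent to each other), and your proposed resolution via near-perfect matchings of the bipartite double cover and a ``canonically chosen'' augmenting alternating path is only a sketch whose crucial injectivity claim you yourself flag as unproven. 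As written, the theorem is not established.

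For comparison, the paper closes exactly this gap by a squaring argument that works uniformly, with no adjacency hypothesis on $w_i,w_j$. One splits $C_v=X+Y_i+Y_j+Z$ according to which of $w_i,w_j$ are fixed points, notes $(C_v)^2\ge 4(XZ+Y_iY_j)$ by AM--GM, and then injects \emph{pairs} of covers counted by $(C_{w_ivw_j})^2$ into pairs counted by $XZ+Y_iY_j$: superimpose the two covers as a red/blue edge-coloured directed multigraph, decompose it uniquely into alternating-colour closed walks, delete the four edges at $v$ and insert loops, and reverse and recolour one of the two resulting alternating paths between $w_i$ and $w_j$ so that new loops at $w_i$ and $w_j$ can be consistently coloured. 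Working with pairs of covers is what substitutes for the missing edge $ab$: the second cover supplies the ``return route'' from $b$ to $a$ that your single-cover splicing argument needs but cannot find when $a\not\sim b$. If you want to complete your write-up along your own lines, you will need an idea of comparable strength for the non-adjacent case; the local surgery you use for $a\sim b$ provably cannot extend, for the reason you already identify.
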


This theorem will be the main ingredient  used to obtain a more precise estimate for $\# S_{\rm div}(n)$ and $\# S_{\rm lcm}(n)$.

\begin{theorem} \!\label{thm:divlimit}
There exists an effectively computable constant $c_d = \!\displaystyle{\lim_{n\to \infty}} \!\left(\# S_{\rm div}(n)\right)^{1/n}$ and, for $\epsilon >0$, we have \[\#S_{\rm div}(n) = c_d^{n\left(1 + O\left(\exp((-1+\epsilon)\sqrt{\log n \log \log n}\right)\right)}.\]
\end{theorem}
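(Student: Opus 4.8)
The plan is to work in the cycle-cover language, where $\#S_{\rm div}(n) = C(\mathcal{D}_{[1,n]})$, and to analyze $\log C(\mathcal{D}_{[1,n]})$ by deleting vertices one at a time and multiplying the resulting ratios $R$, each controlled by Theorem \ref{thm:graphbd}. The order of deletion is what matters. If I delete the vertices $1, 2, \ldots, n$ in increasing order, then at the moment $v_j$ is removed it lives in $\mathcal{D}_{[j,n]}$, whose only neighbours of $v_j$ are its multiples $j, 2j, \ldots$ in $[j,n]$, so $d(v_j) = \lfloor n/j\rfloor$. Theorem \ref{thm:graphbd} then gives $C(\mathcal{D}_{[1,n]}) = \prod_{j=1}^n R(\mathcal{D}_{[j,n]}, v_j) \le \prod_{j=1}^n \bigl(1 + \binom{\lfloor n/j\rfloor}{2}\bigr)$, and since the factors decay like $n^2/(2j^2)$ the sum of their logarithms is $O(n)$. (Deleting in the opposite order would instead cost $\sum_{j\le n}\log\tau(j)\asymp n\log\log n$ and would not even be geometric.) This already produces a clean geometric upper bound of the claimed shape, and is the mechanism behind the explicit constants; the real content of the theorem is to upgrade this into a genuine limit with a quantitative error term.

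For the existence of the limit the plan is to exploit a multiplicative self-similarity of the divisor graph. I would fix a parameter $y$ and factor each $j = s\cdot t$, where $s$ is the $y$-smooth part of $j$ and $t$ its $y$-rough part; since $s$ and $t$ are coprime one has $st \mid s't'$ if and only if $s\mid s'$ and $t\mid t'$, so divisibility factors through the two coordinates. Partitioning $[1,n]$ by the smooth part $s$, the class attached to $s$ is $s$ times the $y$-rough numbers up to $n/s$, and after dividing by $s$ it is isomorphic to the $y$-rough divisor graph $\mathcal{R}_{n/s}$. Counting only cycle covers that respect this partition gives the lower bound $C(\mathcal{D}_{[1,n]}) \ge \prod_{s\ y\text{-smooth}} C(\mathcal{R}_{n/s})$, with a matching upper bound obtained by reinstating the cycles that cross between classes. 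I would first show that for fixed $y$ the rough count is itself exponential, $C(\mathcal{R}_{x}) = \gamma_y^{x(1+o(1))}$ — here the rough graph is sparse, since any two adjacent rough numbers differ by a factor exceeding $y$, so a direct subadditivity argument applies — and then verify that the rates cohere as $y$ grows, via the identity $\sum_{s\ y\text{-smooth}} 1/s = \prod_{p\le y}(1-1/p)^{-1}$, into a single constant $c_d$ independent of $y$.

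The two sides of this comparison differ only because of (i) cross-class cycles and (ii) the $y$-smooth and smooth-divisible "defect" numbers, namely those $j$ with $s>1$; both are controlled by Theorem \ref{thm:graphbd}, which bounds the multiplicative cost of deleting each defect vertex by $1 + \binom{\tau(j)}{2}$ regardless of the surrounding graph. The count of defect numbers up to $n$ is essentially $\Psi(n,y)$, and with the choice $y = \exp\bigl(c\sqrt{\log n\log\log n}\bigr)$ one has $\Psi(n,y) = n\exp\bigl(-(c'+o(1))\sqrt{\log n\log\log n}\bigr)$, while each defect vertex contributes only $O(\log\log n)$ to $\log C(\mathcal{D}_{[1,n]})$. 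Absorbing this $\log\log n$ into the exponent yields a relative error of $\exp\bigl(-(1-\epsilon)\sqrt{\log n\log\log n}\bigr)$ in $\tfrac1n\log C(\mathcal{D}_{[1,n]})$, which is exactly the asserted estimate; balancing this against the opposite effect (large $y$ degrading the convergence of the rough count) fixes the optimal $c$. Since Theorem \ref{thm:graphbd} is completely explicit and the smooth-number inputs are effective, $c_d$ is effectively computable.

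The main obstacle is making this "almost multiplicativity" quantitative: I must control the cross-class correction and the smooth defect simultaneously, uniformly in $y$, and with enough precision to reach the exponent $-(1-\epsilon)$ rather than some weaker decay. This is precisely where Theorem \ref{thm:graphbd} is indispensable — without a bound on $R(G,v)$ depending on $\deg(v)$ alone, the correction from each defect vertex would depend on the global, highly connected structure of $\mathcal{D}_{[1,n]}$ and could not be reduced to a product over the sparse set of defects. Ensuring that this product is genuinely $\exp(o(n))$, and that the coherence of the rates $\gamma_y$ across $y$ holds with the required uniformity, is where I expect the real work to lie.
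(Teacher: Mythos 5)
Your first paragraph (telescoping the ratios $R(\mathcal{D}_{[a,n]},v_a)$ in increasing order of $a$ and bounding each factor by Theorem \ref{thm:graphbd}) is exactly the paper's mechanism for the upper bounds, and it is also the right starting point for this theorem: the paper sets $f(a,n)=\log R(\mathcal{D}_{[a,n]},v_a)$, observes that this depends only on the connected component of $v_a$ in $\mathcal{D}_{[a,n]}$, uses Theorem \ref{thm:graphbd} only to verify the growth hypothesis $f(a,n)=O(\log (n/a))$, and then invokes a mild strengthening of the main theorem of \cite{mcnew}, which asserts that sums $\sum_{a\le n}f(a,n)$ of such component-determined functions have a mean value with error $O\left(n\exp(-(1-\epsilon)\sqrt{\log n\log\log n})\right)$. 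That external result is where both the existence of the limit and the quantitative error term come from; your proposal replaces it with a smooth/rough factorization, and that replacement has a genuine gap.

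Concretely: the vertices at which your partition by smooth part $s$ loses information are those incident to cross-class edges, and these are essentially all of $[1,n]$ --- for every $j\le n/2$ the edge between $j$ and $2j$ crosses classes as soon as $2\le y$. Moreover the set $\{j: s>1\}$ has density $1-\prod_{p\le y}(1-1/p)\to 1$, not $\Psi(n,y)/n\to 0$, so the ``defect'' is not supported on a sparse set and cannot be absorbed as a product of $\Psi(n,y)$ factors of size $O(\log\log n)$. More fundamentally, the partition-respecting covers are exponentially fewer than all covers: the covers pairing $j\leftrightarrow 2j$ for a positive proportion of $j$ all violate the partition, and since $c_d>2$ discarding them costs an exponential factor; there is no reason the per-element growth rate of the $y$-rough divisor graph coheres to $c_d$, and the identity $\sum_s 1/s=\prod_{p\le y}(1-1/p)^{-1}$ only accounts for vertex counts, not cover counts. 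Finally, the ``direct subadditivity'' for $C(\mathcal{R}_x)$ does not apply as stated, because the divisor graph is not translation invariant (the rough divisor graph on $[x+1,x+x']$ is not isomorphic to $\mathcal{R}_{x'}$) --- this lack of self-similarity is precisely the obstacle that the component-counting machinery of \cite{mcnew} is designed to overcome, and your argument would need some substitute for it.
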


We obtain an identical result for $\# S_{\rm lcm}(n)$.

\begin{theorem} \!\label{thm:lcmlimit}
There exists an effectively computable constant $c_l = \!\displaystyle{\lim_{n\to \infty}} \!\left(\# S_{\rm lcm}(n)\right)^{1/n}$ and, for $\epsilon >0$, we have \[\#S_{\rm lcm}(n) = c_l^{n\left(1 + O\left(\exp((-1+\epsilon)\sqrt{\log n \log \log n}\right)\right)}.\]
\end{theorem}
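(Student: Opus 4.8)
Since $\#S_{\rm lcm}(n)=C(\mathcal{L}_{[1,n]})$, it suffices to prove that $\tfrac1n\log C(\mathcal{L}_{[1,n]})$ converges to a limit $\log c_l$ at the asserted rate; the plan is to run the argument used for Theorem~\ref{thm:divlimit}, indicating the two points where the lcm setting must be re-checked. The structural engine is a scaling self-similarity: for every $k\ge 1$ the subgraph of $\mathcal{L}_{[1,n]}$ induced on the multiples of $k$ is isomorphic, via $j\mapsto j/k$, to $\mathcal{L}_{[1,\lfloor n/k\rfloor]}$, because $\lcm(ki,kj)=k\,\lcm(i,j)\le n$ if and only if $\lcm(i,j)\le \lfloor n/k\rfloor$. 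This is the exact analogue of the identity $ki\mid kj\iff i\mid j$ used for $\mathcal{D}_{[1,n]}$, so the skeleton of the proof transfers unchanged.

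To obtain the limit I would first isolate a thin set $T=T(n)\subseteq[1,n]$ of ``irregular'' integers whose deletion causes the graph to split, up to $T$, into disjoint scaled copies of smaller lcm-graphs, so that $C(\mathcal{L}_{[1,n]}-T)=\prod_i C(\mathcal{L}_{[1,n_i]})$ and hence $a_n:=\log C(\mathcal{L}_{[1,n]})$ is approximately superadditive; the superadditive form of Fekete's lemma then gives existence of the limit. The cost of reinstating the deleted vertices is controlled entirely by Theorem~\ref{thm:graphbd}: deleting the elements of $T$ one at a time, and noting that the degree of a vertex can only drop as others are removed, one gets
\[ 1\ \le\ \frac{C(\mathcal{L}_{[1,n]})}{C(\mathcal{L}_{[1,n]}-T)}\ \le\ \prod_{u\in T}\left(1+\frac{d(u)^2-d(u)}{2}\right), \]
where $d(u)$ is the degree of $u$ in $\mathcal{L}_{[1,n]}$. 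Taking logarithms, the deletion perturbs $\log C$ by at most $\sum_{u\in T}\log\bigl(1+\tfrac12 d(u)^2\bigr)$, so everything reduces to choosing $T$ small with its members of small degree, and to quantifying the Fekete defect — which together yield not just convergence but an effective rate.

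For the rate the natural choice is to let $T$ be governed by a smoothness threshold $y$. Using the Dickman--de Bruijn estimate $\Psi(n,y)\sim n\rho(u)$ with $u=\log n/\log y$ for the count of $y$-smooth integers up to $n$, the relevant count becomes $n\rho(u)=n\exp\bigl(-(1+o(1))\tfrac{1}{2\beta}\sqrt{\log n\log\log n}\bigr)$ when $y=\exp\bigl(\beta\sqrt{\log n\log\log n}\bigr)$. Since each such $u$ is large and, by its restricted factorization, has degree in $\mathcal{L}_{[1,n]}$ bounded by a small power of $\log n$, the total log-cost $\sum_{u\in T}\log(1+\tfrac12 d(u)^2)$ is $|T|\cdot(\log\log n)^{O(1)}$; choosing $\beta=\tfrac12$ balances this against the Fekete defect and produces the factor $1+O\bigl(\exp((-1+\epsilon)\sqrt{\log n\log\log n})\bigr)$ in the exponent, which is exactly the stated error term.

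The main obstacle is making ``splits, up to $T$, into scaled copies'' precise: one must pin down a $T$ that simultaneously (i) genuinely decouples the scales, so that $C(\mathcal{L}_{[1,n]}-T)$ factors as claimed, and (ii) consists only of vertices whose degree in $\mathcal{L}_{[1,n]}$ is small enough for the ratio bound to give the target error. Point (ii) is where the lcm case is most delicate, since small integers have enormous degree in $\mathcal{L}_{[1,n]}$ — the vertex $1$ alone has degree $n$ — so $T$ must be confined to large integers with restricted prime factorizations, and the estimate $d(u)\le (\log n)^{O(1)}$ for such $u$ under the constraint $\lcm(u,j)\le n$ must be proved afresh for $\mathcal{L}_{[1,n]}$ rather than inherited from the divisor graph. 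Once these two degree-and-counting inputs are established, the remainder of the argument is identical to that of Theorem~\ref{thm:divlimit}.
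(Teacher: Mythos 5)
Your outline has a genuine gap at its core, and it is exactly the step you flag as ``the main obstacle'': the existence of a thin set $T$ whose removal makes $\mathcal{L}_{[1,n]}$ factor into disjoint \emph{scaled copies of initial segments} $\mathcal{L}_{[1,n_i]}$. The scaling isomorphism $j\mapsto j/k$ on the multiples of $k$ is correct, but these copies overlap for different $k$, and the complement of any one of them (e.g.\ the odd integers) is not an lcm-graph of an initial segment. More importantly, every integer $m\le n/K$ is a hub joining all of its multiples, so to disconnect the graph at all one must delete essentially the whole interval $[1,n/K]$; what then remains are the small connected components of $\mathcal{L}_{(n/K,n]}$, and these are graphs of the form ``component of $v_d$ in $\mathcal{L}_{[d,t]}$ with $t/d$ bounded,'' not initial segments $\mathcal{L}_{[1,m]}$. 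Consequently $\log C(\mathcal{L}_{[1,n]})$ does not satisfy the superadditivity relation your Fekete argument needs, and the limit cannot be extracted that way; what is actually needed is an equidistribution statement for the isomorphism types of these components. (A smaller inaccuracy: the degree bound $d(u)\le(\log n)^{O(1)}$ for smooth $u$ is false in general, since $\tau(u)$ can be superpolylogarithmic, though this is harmless because $\log(1+\tfrac12 d^2)=O(\log n)$ always suffices.)

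The paper takes a different route that avoids constructing any such $T$. It writes $\#S_{\rm lcm}(n)=\prod_{a=1}^n R(\mathcal{L}_{[a,n]},v_a)$ as a telescoping product, sets $f(a,n)=\log R(\mathcal{L}_{[a,n]},v_a)$, and observes that $f$ depends only on the connected component of $v_a$ in $\mathcal{L}_{[a,n]}$ (which has the same vertex set as the component of $v_a$ in $\mathcal{D}_{[a,n]}$, since an edge $v_i\sim v_j$ of the lcm-graph forces both to be joined to $v_{\lcm(i,j)}$ in the divisor graph). Theorem \ref{thm:graphbd} together with Proposition \ref{prop:lcmbound} gives $f(a,n)=O(\log\frac{n}{a})$, which lets the main theorem of \cite{mcnew} — an asymptotic, with the stated error term, for sums $\sum_{a\le n}f(a,n)$ of component-local functions — be applied after relaxing its boundedness hypothesis. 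The error term $\exp(-(1-\epsilon)\sqrt{\log n\log\log n})$ is inherited from that theorem rather than derived from a fresh smooth-number optimization. Your use of Theorem \ref{thm:graphbd} to control the cost of vertex deletions is the right instinct and matches the paper's use of it, but without either the decomposition you postulate or the component-equidistribution input from \cite{mcnew}, the proof is not complete.
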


\section{Improved upper bounds} \label{sec:upbds}

Before proving these results, we can get a preview of how Theorem \ref{thm:graphbd} will be used by applying it to get improved upper bounds for $\# S_{\rm div}(n)$ and $\# S_{\rm lcm}(n)$. We start by writing $\# S_{\rm div}(n)$ as a telescoping product.  This gives\footnote{Since the interval $[n+1,n]$ is the empty set, we set $\#S_{\rm div}([n+1,n]) = C(\mathcal{D}_{[n+1,n]})=1$.}
\begin{align*}
    \# S_{\rm div}(n) = \prod_{a=1}^n \frac{\# S_{\rm div}([a,n])}{\# S_{\rm div}([a+1,n])} &= \prod_{a=1}^n \frac{C(\mathcal{D}_{[a,n]})}{C(\mathcal{D}_{[a+1,n]}))} \\
    &=\prod_{a=1}^n R(\mathcal{D}_{[a,n]},v_a) \leq \prod_{a=1}^n\left(1+\frac{d(v_a)^2-d(v_a)}{2}\right)
    \end{align*}
    using the bound from Theorem \ref{thm:graphbd}.  Noting that the vertex $v_a$ is connected to each of the $\left\lfloor \frac{n}{a} \right\rfloor$ multiples of $a$ less than $n$ (including itself) in $\mathcal{D}_{[a,n]}$ and subsequently grouping together vertices with the same degree we get 
    \begin{align*}
     \# S_{\rm div}(n) &\leq \prod_{a=1}^n\left(1+\frac{d(v_a)^2-d(v_a)}{2}\right) \ = \ \prod_{a=1}^n\left(1+\frac{\left\lfloor \frac{n}{a}\right\rfloor^2-\left\lfloor \frac{n}{a}\right\rfloor}{2}\right) \\
    &= \exp\left(\sum_{a=1}^n \log\left( 1+\frac{\left\lfloor \frac{n}{a}\right\rfloor^2-\left\lfloor \frac{n}{a}\right\rfloor}{2}\right) \right)\\
    &= \exp\left(\sum_{k=1}^{n} \left(\left\lfloor\frac{n}{k\vphantom{+1}}\right\rfloor - \left\lfloor\frac{n}{k{+}1}\right\rfloor\right)\log\left( 1+\frac{k^2 - k}{2}\right) \right). 
    \end{align*}
    Finally, taking $n$-th roots and letting $n\to \infty$ we get that 
    \begin{align*}
    \lim_{n\to \infty}\left(\# S_{\rm div}(n) \right)^{1/n} &\leq \lim_{n \to \infty} \exp\left(\frac{1}{n} \sum_{k=1}^{n} \left(\left\lfloor\frac{n}{k\vphantom{+1}}\right\rfloor - \left\lfloor\frac{n}{k+1}\right\rfloor\right)\log\left( 1+\frac{k^2 - k}{2}\right) \right) \\
    &= \lim_{n \to \infty}\exp\left( \sum_{k=1}^{n} \left(\frac{\log\left( 1+\frac{k^2 - k}{2}\right)}{k(k+1)}\right) + O\left(\frac{\log n}{\sqrt{n}}\right) \right)\\
    &= \exp\left(\sum_{k=1}^{\infty} \frac{\log\left( 1+\frac{k^2 - k}{2}\right)}{k(k+1)} \right) < \exp(1.19806) < 3.31369.
\end{align*}
Note that when we remove the floor signs in the second line above, there are $\sqrt{n}$ values of $k \leq \sqrt{n}$, which each contribute $O\left(\log n\right)$ to the sum, and the difference $\left(\left\lfloor\frac{n}{k\vphantom{+1}}\right\rfloor - \left\lfloor\frac{n}{k+1}\right\rfloor\right)$ is 0 for all but at most $\sqrt{n}$ values of $k>\sqrt{n}$, which again each contribute $O(\log n)$. This is how the error term in that line is obtained.  

The numerical bound was obtained by evaluating the sum exactly for $1\leq k \leq 10^7$ and then numerically bounding the contribution from the tail $k>10^7$.  We obtain further improved bounds below in Section \ref{sec:numerics}.

Computing improved upper bounds for $\# S_{\rm lcm}(n)$ follows similarly, however there is some additional complexity, as the degree of $v_a$ in $\mathcal{L}_{[a,n]}$ is not so simply described as in $\mathcal{D}_{[a,n]}$.  

\begin{prop} \label{prop:lcmbound}
The degree of $v_a$ in the graph $\mathcal{L}_{[a,n]}$ is 
\[d(v_a) = \sum_{1\leq j \leq \frac{n}{a}} \sum_{\substack{1 \leq \ell \leq j\\ \gcd(j,\ell) =1 \\ \ell \mid a} } 1. \]
\end{prop}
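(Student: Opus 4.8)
The plan is to count directly the neighbors of $v_a$ in $\mathcal{L}_{[a,n]}$ and match them bijectively to the pairs $(j,\ell)$ indexing the double sum. By definition, $d(v_a)$ is the number of integers $b$ with $a \leq b \leq n$ and $\lcm(a,b) \leq n$, where $b=a$ (the self-loop) is counted once. So it suffices to exhibit a bijection between this set of $b$'s and the set of pairs $(j,\ell)$ with $1 \leq \ell \leq j \leq n/a$, $\gcd(j,\ell)=1$, and $\ell \mid a$.

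For such a $b$, I would set $g = \gcd(a,b)$ and write $a = g\ell$, $b = gj$ with $\gcd(j,\ell)=1$. Then $\ell = a/g$ divides $a$, and $\lcm(a,b) = gj\ell = aj$, so the constraint $\lcm(a,b)\leq n$ becomes exactly $j \leq n/a$, while the constraint $b \geq a$ becomes $gj \geq g\ell$, i.e. $\ell \leq j$. Thus every neighbor $b$ produces a pair $(j,\ell)$ lying in precisely the index set of the claimed double sum.

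For the converse, given such a pair I would recover $b = aj/\ell$. Since $\ell \mid a$ this is an integer; the coprimality $\gcd(j,\ell)=1$ forces $\gcd(a,b)=a/\ell$, so that $\lcm(a,b)=aj\leq n$, and $\ell \leq j$ gives $b \geq a$. The two maps are mutually inverse because a reduced fraction determines its numerator and denominator uniquely, so $(j,\ell) \mapsto aj/\ell$ is a bijection onto the set of neighbors of $v_a$. Counting the pairs then yields the stated formula.

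The argument is essentially bookkeeping, and the only points requiring care are to keep straight which of the two inequalities on $(j,\ell)$ encodes $b \geq a$ and which encodes $\lcm(a,b)\leq n$, and to confirm that integrality of $b$ comes exactly from the divisibility condition $\ell \mid a$. The case $b=a$ corresponds to $(j,\ell)=(1,1)$, confirming that the self-loop is counted once, consistent with the convention for $d(v_a)$.
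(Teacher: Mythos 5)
Your proof is correct and follows essentially the same route as the paper: both parametrize a neighbor $b$ by the pair $(j,\ell)$ with $\lcm(a,b)=ja=\ell b$ (equivalently $j=b/\gcd(a,b)$, $\ell=a/\gcd(a,b)$) and verify that the conditions $j\leq n/a$, $\ell\leq j$, $\gcd(j,\ell)=1$, $\ell\mid a$ exactly characterize the image, with $b=ja/\ell$ inverting the map. Your write-up just makes the bijection and the role of $\gcd(a,b)$ slightly more explicit than the paper's.
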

\begin{proof}
The vertex $v_a$ is connected to $v_b$ in $\mathcal{L}_{[a,n]}$ if $a\leq b \leq n$ and there are integers $j,\ell$ with $\lcm(a,b) = ja = \ell b \leq n$. In this case we must have $j \leq \frac{n}{a}$, $\ell \leq j$ (since $a\leq b$), $\gcd(j,\ell)=1$, and so $\ell \mid a$.  Having chosen values of $a$ and $j$ satisfying these conditions, $b=\frac{ja}{\ell}$ is uniquely determined and has $\lcm(a,b)\leq n$. Thus summing over all possible pairs $\ell$ and $j$ gives the result.
\end{proof}

Since \[\sum_{\substack{1 \leq \ell \leq j\\ \gcd(j,\ell) =1 \\ \ell \mid a} } 1 \leq \sum_{\substack{1 \leq \ell\leq j\\ \gcd(j,\ell) =1 } } 1 = \varphi(j),\] 
we set $\Phi_k = \sum_{j=1}^k \varphi(j)$, and the proposition implies that the degree of $v_a$ in $\mathcal{L}_{[a,n]}$ satisfies $d(v_a) \leq \Phi_{\left\lfloor\frac{n}{a}\right\rfloor}$.  Following the same logic as for $\# S_{\rm div}(n)$, we get 
\begin{align*}
     \# S_{\rm lcm}(n) &\leq  \prod_{a=1}^n\left(1+\frac{\Phi_{\left\lfloor \frac{n}{a}\right\rfloor}^2-\Phi_{\left\lfloor \frac{n}{a}\right\rfloor}}{2}\right) \\
    &= \exp\left(\sum_{k=1}^{n} \left(\left\lfloor\frac{n}{k\vphantom{+1}}\right\rfloor - \left\lfloor\frac{n}{k{+}1}\right\rfloor\right)\log\left( 1+\frac{\Phi_k^2 - \Phi_k}{2}\right) \right). 
    \end{align*}
    So, taking $n$-th roots and letting $n\to \infty$ gives
    \begin{align}
    \lim_{n\to \infty}\left(\# S_{\rm lcm}(n) \right)^{1/n} &\leq  \exp\left(\sum_{k=1}^{\infty} \frac{\log\left( 1+\frac{\Phi_k^2 - \Phi_k}{2}\right)}{k(k+1)} \right)  \label{eq:firstlcmbound} \\[0.5em]
    &< \exp(1.88819) < 6.60740. \nonumber
\end{align}
Again, the final bound was obtained by evaluating the sum exactly for $1\leq k \leq 10^7$ and numerically bounding the contribution from the tail $k>10^7$. We improve this bound further by handling the degrees of the vertices more carefully (and utilizing further numerical computations) in Section \ref{sec:numerics}.

\section{Graph Theory}
In this section, we prove Theorem \ref{thm:graphbd}, the bound for the ratio $R(G,v)$ of the number of vertex-disjoint directed cycle covers of $G$ including $v$ to those with $v$ removed (or fixed).  The result is true for graphs in general, not just those obtained as divisor or lcm-graphs.

Suppose that $v$ is a vertex of $G$ of degree $d$ (counting the loop on $v$.)  Our goal is to show that \[R(G,v) = \frac{C(G)}{C(G\setminus \{v\})} \leq 1+\frac{d^2-d}{2}.\]

Let $W=\{w_1,w_2,\ldots, w_{d-1}\}$ be the set of vertices adjacent to $v$ in $G$, not including $v$ itself.  Write $C(G)$, the total number of vertex-disjoint directed cycle covers of $G$, as \[C(G) = C_v + \sum_{i=1}^{d-1} C_{vw_i} + \sum_{\substack{1\leq i,j <d\\ i \neq j}} C_{w_ivw_j}\] where $C_v = C(G \setminus \{v\})$ is the number of vertex-disjoint directed cycle covers of $G$ in which $v$ is part of a cycle of length 1, $C_{vw_i}$ is the count of those cycle covers in which $v$ is part of a cycle of length 2 along with its neighbor $w_i$, and $C_{w_ivw_j}$ counts those where $v$ is part of a cycle of length greater than 2 and contains the edges $w_i \to v \to w_j$.  

The cycles counted by $C_{vw_i}$ are in bijection with the of vertex-disjoint directed cycle covers of $G$ in which both $v$ and $w_i$ are fixed (each part of cycles of length 1). Furthermore, the number of such cycles is less than $C_v$, the number of cycle covers where only $v$ is required to be fixed, so $C_{vw_i} \leq C_v$.  

In Lemma \ref{lem:colors} below we consider $C_{w_ivw_j}$ and show, using an argument suggested by Petrov on Mathoverflow \cite{Petrov}, that $\left(C_{w_ivw_j}\right)^2 \leq \frac{1}{4}\left(C_v\right)^2$, and thus $C_{w_ivw_j} \leq \frac{C_v}{2}$.

Inserting these inequalities $C_{vw_i} \leq C_v$ and $C_{w_ivw_j} \leq \frac{C_v}{2}$ into the definition of $R(G,v)$, we find that 
\begin{align*}
    R(G,v) &= \frac{1}{C_v}\left(C_v + \displaystyle{\sum_{i=1}^{d-1} C_{vw_i} + \sum_{\substack{1\leq i,j <d\\ i \neq j}}} C_{w_ivw_j}\right) \\
    &\leq 1 + (d-1) + (d-1)(d-2)\frac{1}{2} = 1+\frac{d^2 -d}{2}
\end{align*}
as desired. 

It remains to demonstrate the bound mentioned above, comparing $C_{w_ivw_j}$ to $C_v$.

\begin{lemma} \label{lem:colors}
Fix distinct neighbors $w_i$ and $w_j$ of $v$ in $G$ and let $C_v$ and $C_{w_ivw_j}$ be defined as above.  Then \[\left(C_{w_ivw_j}\right)^2 \leq \frac{1}{4}\left(C_v\right)^2.\]
\end{lemma}
\begin{proof}
We first break up the count $C_v$, depending on whether or not each of the vertices $w_i$ and $w_j$ are fixed (a cycle of length 1) or part of a larger cycle. We write 
\[C_v = X + Y_i + Y_j + Z\]
where $X$ counts the contribution from the cases where $v$, $w_i$, and $w_j$ are all part of 1-cycles, $Y_i$ (respectively $Y_j$) counts those where only $v$ and $w_i$ ($w_j$) are fixed, and $Z$ counts those where $v$ is fixed but  neither $w_i$ nor $w_j$ is fixed.  We can then write \begin{align*}
    (C_v)^2 = (X + Y_i + Y_j + Z)^2 &\geq (X+Z)^2 + (Y_i+Y_j)^2 \geq 4(XZ +Y_iY_j)
\end{align*}
by the AM-GM inequality.  So it remains to show that $XZ +Y_iY_j \geq (C_{w_ivw_j})^2$.  We prove this by creating an injection from pairs of cycle-covers counted by the latter to those counted by the former. 

Each of the products $XZ$, $Y_iY_j$ and $(C_{w_ivw_j})^2$ count pairs of vertex-disjoint directed cycle covers. We color the one contributed by the first entry in the product blue, and the one from the second red.  Drawing them together on the same graph results in a colored, directed multigraph in which every vertex has both an inward pointing blue and red edge, and a blue and a red edge directed away from it. Note that edges of $G$ may be used multiple times. When this construction is done for a pair of coverings counted by $(C_{w_ivw_j})^2$, the result will contain both red and blue edges $w_i\to v \to w_j$.

We can now take this collection of red-and-blue-colored, directed edges and, instead of viewing it as two monochromatic vertex-disjoint directed cycle covers, we partition it into alternating-color directed cycles.  There is a unique way to do this: at each vertex there is precisely one inward directed edge of each color and one outward directed edge of each color, so the cycles are found by alternately following blue and red edges.  Every vertex is visited exactly twice by the resulting collection of alternating-color directed cycles. (A vertex may appear twice on the same cycle, or on two different cycles.)

The injection from objects counted by $(C_{w_ivw_j})^2$ to those counted by $XZ +Y_iY_j$ now proceeds as follows:  Remove all four edges incident either to or from $v$. Then add two loops to $v$, one of each color.  The result is a graph in which every vertex except for $w_i$ and $w_j$ has an inward and outward oriented edge of each color, while $w_i$ has two inward pointing edges of each color, and $w_j$ has two outward pointing edges of each color.  This graph can thus be decomposed into alternating-color directed cycles, as well as two alternating-color directed paths from $w_j$ to $w_i$.  Let $P$ be the path that ends in a blue edge at $w_i$.  

Now add a loop to each of the vertices $w_i$ and $w_j$.  While we can't initially assign these loops colors consistent with the coloring of the remainder of the edges (as each vertex is adjacent to precisely one blue and one red edge) we color the new loop on $w_i$ blue.  We then recolor every edge in the path $P$ and reverse their directions, so that now the coloring and orientation of edges at $w_i$ (and every intermediate vertex along $P$) is consistent.  Since we have flipped the color and direction of one of the two edges adjacent to $w_j$, both edges now have the same color and opposite orientations.  We can thus color the new loop at $w_j$ the opposite color, resulting in a consistent coloring at that vertex as well. 

The result is a new collection of alternating-color directed cycles of $G$, with $v$ having two self-loops, and $w_i$ and $w_j$ each having a single loop.  We can now separate this collection of edges back into two monochromatic vertex-disjoint directed cycle covers.  We end up with two cases, based on whether the loops added to $w_i$ and $w_j$ are the same color or not.  If they are both blue, then the red coloring fixes only $v$ and neither $w_i$ nor $w_j$, so this pair is counted by $XZ$.  On the other hand, if the loops have opposite colors, then the resulting coloring is counted by $Y_iY_j$.

Every step of this construction is reversible, so it is easily checked that this is an inversion, completing the proof.  (It is not a surjection---if the collection of alternating-color directed cycles produced from a pair of colorings counted by $XZ$ or $Y_iY_j$ does not result in both of the loops on $w_i$ and $w_j$ being part of the same alternating-color directed cycle, then that pair is not in the image of this map.)
\end{proof}

\section{Asymptotic results for $\# S_{\rm div}(n)$ and $\# S_{\rm lcm}(n)$}
In \cite{mcnew}, while investigating several counting problems related to the divisibility of integers in the interval $[1,n]$, the author proved a general result about functions $f(b,m)$ which \textit{depend only on the connected component of $v_b$ in the divisor graph of the interval $[b,n]$}.  We say that a function $f:\mathbb{N}^2 \to \mathbb{R}$ has this property if $f(b,m)=f(b',m')$ whenever the connected component of $v_b$ in $\mathcal{D}_{[b,m]}$ is isomorphic to the connected component of $v_{b'}$ in $\mathcal{D}_{[b',m']}$, with an isomorphism that maps $v_b$ to $v_{b'}$.  For functions with that property, \cite{mcnew} proves the following theorem:

\begin{theorem}[Main Theorem of \cite{mcnew}] \label{thm:main} Suppose $\epsilon>0$, $A\geq0$ and $f(a,n)$ is a bounded function $|f(a,n)|\leq A$ that depends only on the connected component of $v_a$ in $\mathcal{D}_{[a,n]}$.  Then there exists a constant \begin{equation} C_f = \sum_{i=1}^\infty  \sum_{\substack{1\leq d\\ P^+(d)\leq i}} \sum_{t \in [id,(i+1)d)} \left(\frac{f(d,t)}{t(t+1)}\prod_{p\leq i} \frac{p-1}{p}\right)\label{constant} \vspace{-2mm}\end{equation}
such that \begin{equation} \sum_{a=1}^n f(a,n) = nC_f + O_\epsilon\left(An\exp\left(-(1-\epsilon)\sqrt{\log n \log \log n}\right)\right).\label{asymptotic} \end{equation}
\end{theorem}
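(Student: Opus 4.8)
The plan is to prove the identity in two stages: first a purely structural reduction that replaces the ``live'' argument $a$ by a bounded piece of data, and then an analytic count of how often each such piece of data occurs. For the structural stage, write $i = \lfloor n/a\rfloor$ and factor $a = d\,s$, where $d$ is the largest divisor of $a$ all of whose prime factors are at most $i$ (the $i$-smooth part) and $s = a/d$ is composed entirely of primes exceeding $i$. The key claim is an isomorphism lemma: if $p \mid a$ is a prime with $p > \lfloor n/a\rfloor$, then the connected component of $v_a$ in $\mathcal{D}_{[a,n]}$ is isomorphic, via $b \mapsto b/p$, to the component of $v_{a/p}$ in $\mathcal{D}_{[a/p,\lfloor n/p\rfloor]}$, with $v_a \mapsto v_{a/p}$. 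To see this one first shows that every vertex $b$ of the component is divisible by $p$: this holds for $a$ itself, and if $p \mid b$ and $e$ is a neighbor of $b$ with $e \geq a$, then either $e$ is a multiple of $b$ (so $p\mid e$) or $e \mid b$ with $p \nmid e$, forcing $pe \mid b$ and hence $e \leq n/p < a$, a contradiction. Once divisibility by $p$ is known, $b \mapsto b/p$ is an adjacency-preserving bijection onto the component of $v_{a/p}$, the inverse being multiplication by $p$.

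Iterating the lemma over the prime factors of $s$ (at each step the relevant multiplier stays equal to $i$, by the identity $\lfloor \lfloor n/p\rfloor/(a/p)\rfloor = \lfloor n/a\rfloor$) collapses $a$ to its smooth part and yields $f(a,n) = f(d,t)$, where $t = \lfloor n/s\rfloor$. Moreover $\lfloor t/d\rfloor = \lfloor n/(ds)\rfloor = i$, so $t \in [id,(i+1)d)$ and $P^+(d)\leq i$ --- exactly the index ranges appearing in the formula for $C_f$. This reduces $f(a,n)$ to a function of the bounded data $(d,t)$.

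For the analytic stage, I would reorganize the sum according to this data. Partitioning first by $i=\lfloor n/a\rfloor$, then by the smooth part $d$, and then by $t = \lfloor n/s\rfloor$, gives
\[ \sum_{a=1}^n f(a,n) = \sum_{i\geq 1}\ \sum_{\substack{d\\ P^+(d)\leq i}}\ \sum_{t=id}^{(i+1)d-1} f(d,t)\,\#\!\left\{s \in \left(\tfrac{n}{t+1},\tfrac{n}{t}\right] : s \text{ has no prime factor} \leq i\right\}. \]
The inner count is the number of integers in a short interval that are free of prime factors up to $i$; by a sieve estimate its main term is $\frac{n}{t(t+1)}\prod_{p\leq i}\frac{p-1}{p}$. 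Substituting this main term reproduces the series $n\,C_f$ exactly, and a convergence check (each pair $(i,d)$ contributes $O(A/i^2)$ after summing the $\asymp d$ values of $t$ against $1/t^2 \leq 1/(id)^2$ and using $\sum_{P^+(d)\leq i} 1/d = \prod_{p \leq i}(1-1/p)^{-1}$) shows $C_f$ is well defined.

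The main obstacle is the error analysis, which is where the term $\exp\!\big(-(1-\epsilon)\sqrt{\log n\log\log n}\big)$ is forced. The sieve estimate for integers free of prime factors $\leq i$ in the interval $(n/(t+1),n/t]$ is only reliable while $i$ is not too large relative to the interval length; for large $i$ (equivalently, when $a$ is small and built from large primes) one cannot extract a main term and must bound the contribution trivially. The right way to do this is to fix a threshold $y$, apply the fundamental lemma of sieve theory on the range where the sifting parameter is below $y$, and on the complementary range bound the number of admissible $a$ by a count of smooth numbers, $\Psi(n,y) \ll n\,\rho(\log n/\log y) = n\exp(-(1+o(1))u\log u)$ with $u = \log n/\log y$. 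Balancing the sieve error against this smooth-number tail and optimizing $y = \exp\!\big(\Theta(\sqrt{\log n\log\log n})\big)$ produces the stated exponent; controlling all of these pieces uniformly in $i,d,t$ (together with the minor floor discrepancies suppressed above) is the technical heart of the argument.
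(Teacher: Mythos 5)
This statement is quoted verbatim from \cite{mcnew} and is not proved in the present paper, so there is no in-paper argument to compare against; judged on its own terms, your reconstruction is sound in its structural half and consistent with the shape of the constant $C_f$. Your isomorphism lemma is correct as stated and proved: the induction showing $p\mid b$ for every $b$ in the component (via $ep\mid b\le n$ forcing $e\le n/p<a$) is right, the identity $\lfloor\lfloor n/p\rfloor/(a/p)\rfloor=\lfloor n/a\rfloor$ lets you iterate over all prime factors of the rough part $s$, and the resulting parametrization of $a$ by $(i,d,t)$ with multiplicity $\#\{s\in(n/(t+1),n/t]:s\ i\text{-rough}\}$ reproduces exactly the triple sum defining $C_f$, with the factor $\frac{n}{t(t+1)}\prod_{p\le i}\frac{p-1}{p}$ arising as the expected rough-number count; your convergence check for $C_f$ (which in fact gives $|C_f|\le A$) is also correct. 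The one genuine gap is the error analysis: you correctly identify that the sieve main term is unreliable for large sifting level and that smooth-number estimates $\Psi(n,y)$ and a balance at $\log y\asymp\sqrt{\log n\log\log n}$ must enter, but you do not actually carry out the truncations in $i$ and $d$, bound the accumulated sieve remainders uniformly over the triples $(i,d,t)$, or verify that the optimization yields the exponent $-(1-\epsilon)\sqrt{\log n\log\log n}$ with the factor $A$ in front. Since the quantitative error term is part of the statement, the proposal as written establishes the existence of $C_f$ and the identity for the main term but only outlines, rather than proves, the claimed rate; completing that estimate (or citing \cite{mcnew} for it, as the paper does) is what remains.
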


Here $P^{+}(d)$ denotes the largest prime divisor of $d$.  The same result applies when divisor graphs are replaced by lcm graphs.

Letting $f(a,n)=\log\left(\frac{\#S_{\rm div}([a,n])}{\#S_{\rm div}([a+1,n])}\right)$, it is easy to see that the function $f$ depends only on the connected component of $v_a$ in the divisor graph.  However, it does not satisfy the requirement that $|f(k,n)|\leq A$ be bounded.  For example, even in the case of $f(1,k)$, adding $v_1$ to the divisibility graph $\mathcal{D}_{[2,k]}$ connects all of the previously unconnected vertices $v_p$ for prime integers $k/2 <p \leq k$.  Since they were previously unconnected, they were fixed by every permutation in $S_{\rm div}([2,k])$.  In $S_{\text{div}}([1,k])$  the integer 1 can be permuted with any of these primes, so we find that $f(1,k) > \log(\pi(k)-\pi(k/2))$, which goes to infinity with $k$. (Here $\pi(k)$ is the prime counting function.)

On the other hand, we can use Theorem \ref{thm:graphbd} to show that even though $f(a,n)$ is not bounded, it cannot go to infinity very quickly as a function of the ratio $\frac{n}{a}$.  In particular, we have \[f(a,n)\leq \log\left(1+\frac{\left(\frac{n}{a}\right)^2 -\frac{n}a}{2}\right)=O\left(\log \frac{n}{a}\right).\]  

In the expression for $C_f$ and throughout the proof of Theorem \ref{thm:main} in \cite{mcnew}, the variable $i$ corresponds to this ratio $i=\left\lfloor\frac an\right\rfloor$.  If we inspect that proof carefully, replacing each instance that the bound $f(a,n)\leq A$ is used with the bound $f(a,n) = O\left(\log i \right)$ instead,  we find that the result is unchanged. (In the course of the proof the $A$ that appears in each of the error terms ends up being replaced by a factor of O($\log N$), where $N$ is a parameter in the proof describing where the sum over $i$ is truncated.  This factor of $\log N$ does not affect the optimization of the error terms, it remains optimal to take $\log N = \sqrt{\log n \log \log n}$. This additional factor can be absorbed into the existing final error term.)  As a result, we can update the theorem to the form stated in Theorem \ref{thm:main2} below.

Furthermore, it is straightforward to check that the proof in \cite{mcnew} applies equally well to the graphs $\mathcal{L}_{[a,n]}$.  The key observation is that for any integers $a$ and $n$, the set of vertices in the connected component of $v_a$ in $\mathcal{D}_{[a,n]}$ is identical to the set of vertices in the connected component of $v_a$ in $\mathcal{L}_{[a,n]}$, since if two vertices $v_i$ and $v_j$ are connected by an edge in $\mathcal{L}_{[a,n]}$, they are both connected to the vertex $v_{\lcm(i,j)}$ in $\mathcal{D}_{[a,n]}$.  Combining these observations we obtain the following updated theorem.

\begin{theorem}
\label{thm:main2} Suppose $\epsilon>0$, $f(a,n)$ depends only on the connected component of $v_a$ in $\mathcal{D}_{[a,n]}$ (respectively $\mathcal{L}_{[a,n]}$) and $f(a,n) = O\left(\log \frac{n}{a}\right)$.  Then, there exists a constant given by \eqref{constant} such that the approximation \eqref{asymptotic} holds.
\end{theorem}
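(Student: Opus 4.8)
The plan is to reuse the proof of Theorem~\ref{thm:main} from \cite{mcnew} essentially verbatim, while carefully tracking the exactly two places where its hypotheses are invoked: first, the boundedness $|f(a,n)|\le A$, and second, the restriction to divisor graphs $\mathcal{D}_{[a,n]}$. I would begin by locating every step of that proof in which the bound $|f(a,n)|\le A$ is used, and checking that at each such step the relevant indices satisfy $n/a \asymp i$, where $i$ is the summation variable of \eqref{constant} measuring the ratio. Where this holds, the weaker hypothesis $f(a,n)=O(\log i)$ may be substituted in place of $A$. The goal is to show that this substitution multiplies each error term only by a factor that is a fixed power of $\sqrt{\log n}$, hence subexponential and harmless.

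The first thing I would verify is that the constant $C_f$ of \eqref{constant} still converges absolutely under the relaxed hypothesis. Substituting $f(d,t)=O(\log i)$ (legitimate because $t\in[id,(i+1)d)$ forces $t/d\asymp i$) and pulling this factor out, the innermost sum telescopes as $\sum_{t\in[id,(i+1)d)}\frac{1}{t(t+1)}=\frac{1}{d\,i(i+1)}$, so the contribution of a fixed $i$ is $O\!\left(\frac{\log i}{i(i+1)}\prod_{p\le i}\frac{p-1}{p}\sum_{P^+(d)\le i}\frac1d\right)$. The decisive cancellation is the Euler product identity $\sum_{P^+(d)\le i}\frac1d=\prod_{p\le i}\frac{p}{p-1}$, which exactly cancels the Mertens product $\prod_{p\le i}\frac{p-1}{p}$ and leaves $O\!\left(\frac{\log i}{i(i+1)}\right)$. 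Since $\sum_i \frac{\log i}{i(i+1)}$ converges, the series defining $C_f$ converges absolutely, so $C_f$ remains well defined.

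Next I would track the error term of \eqref{asymptotic}. In the proof of \cite{mcnew} the constant $A$ enters in the error terms arising from (i) approximating floor functions and summing over the bulk range $i\le N$, and (ii) discarding the tail where $i>N$. In the bulk, the relevant $f$-values carry $i\le N$, so $f=O(\log i)=O(\log N)$ applies and $A$ is simply replaced by $O(\log N)$. In the tail, the terms with $i>N$ correspond to $a<n/N$, of which there are at most $n/N$, and each satisfies $f(a,n)=O(\log(n/a))=O(\log n)$; thus the tail contributes $O\!\left(\tfrac{n}{N}\log n\right)$. With the same optimal choice $\log N=\sqrt{\log n\log\log n}$ as in the original proof, one has $\log N=o(\log n)$ and $\log n=\exp(\log\log n)=\exp\!\left(o(\sqrt{\log n\log\log n})\right)$, so both the bulk factor $\log N$ and the tail factor $\log n$ are absorbed into the exponent by enlarging $\epsilon$ slightly. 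The balance between the competing error terms is therefore unchanged, and the error term of \eqref{asymptotic} holds with the factor $A$ removed.

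Finally, for the extension to lcm graphs I would use the observation already noted above: the \emph{vertex set} of the connected component of $v_a$ is identical in $\mathcal{L}_{[a,n]}$ and in $\mathcal{D}_{[a,n]}$, because any $\mathcal{L}$-edge between $v_i$ and $v_j$ carries $\lcm(i,j)\le n$ and both $v_i,v_j$ are joined to $v_{\lcm(i,j)}$ in $\mathcal{D}_{[a,n]}$. Since the sieve-theoretic machinery of \cite{mcnew} sees only these component vertex sets together with the isomorphism-invariance hypothesis on $f$, the argument transfers without further change. I expect the main obstacle to be precisely the bookkeeping of the tail in step~(ii): one must confirm that the number of discarded terms really is $O(n/N)$ and that the worst-case value $f=O(\log n)$ on those terms is genuinely dominated after optimization, since this is the one place where the relaxation from bounded $f$ to $f=O(\log(n/a))$ is not purely cosmetic and must be checked against the final error term.
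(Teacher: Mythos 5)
Your proposal is correct and follows essentially the same route as the paper: both arguments rerun the proof of Theorem \ref{thm:main} from \cite{mcnew}, replacing each use of the bound $|f(a,n)|\leq A$ by $f(a,n)=O(\log i)$ (yielding an extra factor of $O(\log N)$ that is absorbed after the unchanged optimization $\log N=\sqrt{\log n\log\log n}$), and both transfer to the lcm graphs via the identical observation that the connected component of $v_a$ has the same vertex set in $\mathcal{L}_{[a,n]}$ as in $\mathcal{D}_{[a,n]}$. Your additional explicit verification that the series \eqref{constant} still converges (via the cancellation of $\sum_{P^+(d)\leq i}1/d$ against $\prod_{p\leq i}\frac{p-1}{p}$) and your bookkeeping of the tail $i>N$ are welcome details that the paper leaves implicit.
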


Theorems \ref{thm:divlimit} and \ref{thm:lcmlimit} now follow as corollaries. We take \[f(a,n)=\log\left(\frac{\#S_{\rm div}([a,n])}{\#S_{\rm div}([a+1,n])}\right)=\log\left(R(\mathcal{D}_{[a,n]},v_a)\right)\] (replacing $\mathcal{D}_{[a,n]}$  with $\mathcal{L}_{[a,n]}$ in the case of $\# S_{\rm lcm}(n)$),  noting that the function depends only on the connected component of $v_a$ in of the divisor graph $\mathcal{D}_{[a,n]}$ (or $\mathcal{L}_{[a,n]}$), and use Theorem \ref{thm:graphbd} (as well as Proposition \ref{prop:lcmbound} in the case of $S_{\rm lcm}(n)$) to see that $f(a,n) = O\left(\log \frac{a}{n}\right)$ so that our updated theorem applies.  Finally, as in Section \ref{sec:upbds}, we write $\#S_{\rm div}(n)$ as a telescoping product to get \[\# S_{\rm div}(n) = \prod_{a=1}^n \frac{\# S_{\rm div}([a,n])}{\# S_{\rm div}([a+1,n])} = \exp\left(\sum_{a=1}^n f(a,n)\right),\]
or a similar statement for $\# S_{\rm lcm}(n)$, and the theorems follow from the estimate above.

\section{Numerical Upper Bounds} \label{sec:numerics}

The expressions given for the constants of Theorems \ref{thm:divlimit} and \ref{thm:lcmlimit} provide a method of computing the values of the constants to any precision by computing exactly sufficiently many terms and estimating the contribution for the remainder of the series.  In practice, however, this is not such an easy task, as the values of $f(d,t)$ quickly become difficult to compute.  Computing the number of vertex-disjoint directed cycle covers $C(G)$ of a graph $G$ is equivalent to computing the permanent of the adjacency matrix of the graph, and computing matrix permanents is well known to be a computationally difficult problem

Nevertheless, computing exact values of $f(d,t)$ is tractable so long as the number of vertices in the  corresponding graphs is not too large.  

In order to improve the bound $c_d < 3.31369 $  obtained in Section \ref{sec:upbds}, we compute the exact value of $f(d,t)$ for the divisor graph for all $i,d$ with $id<50000$ and all $t \in [id,(i+1)d)$ for which the connected component of $v_d$ in  $\mathcal{D}_{[d,t]}$ had at most 50 vertices.  For all of the uncomputed values of $f(d,t)$ we use the same bound $f(d,t) \leq \log\left(1+\frac{i^2 -i}{2}\right)$, where $i=\left\lfloor \frac{t}{d} \right\rfloor$ as before.  Doing so gives the improved upper bound $c_d < 2.69390$.

\subsection{Improved bounds for $\# S_{\rm LCM}(n)$}
There is more room for improvement in the upper bound $c_l<6.60740$ obtained in \eqref{eq:firstlcmbound}.  Before resorting to numerical calculation of the exact values of $f(d,t)$, as above, we can first get a significant improvement by more carefully dealing with the precise degree of vertices rather than using the crude bound $d(v_a) \leq \Phi_{\left\lfloor \frac{n}{a} \right\rfloor}$ as we did in Section \ref{sec:upbds}. 

In particular, Proposition \ref{prop:lcmbound} tells us that the degree of $v_a$ in $\mathcal{D}_{[a,n]}$ depends on both $i=\left\lfloor \frac{n}{a}\right\rfloor$, and the set of integers $\{\ell < i : \ell \mid a\}$.  This set is completely characterized by the integer $d'=\gcd(a,M_i)$, where $M_i \coloneqq \lcm([1,i])$ is the least common multiple of the first $i$ integers.  Any integer $a$ in the interval $\left(\frac{n}{i+1},\frac{n}{i}\right]$ having $\gcd(a,M_i)=d'$ will have the same degree, which we denote $T_{i,d'}$. Namely, from Propositon \ref{prop:lcmbound},
\[T_{i,d'} \coloneqq d(v_a) = \sum_{1\leq j \leq i} \sum_{\substack{1 \leq \ell\leq j\\ \gcd(j,\ell) =1 \\ \ell \mid d'} } 1. \vspace{-3mm} \]
For a fixed $d' \mid M_i$, the proportion of integers $a$ having $d' = \gcd(a,M_i)$ is $\frac{\varphi\left(\frac{M_i}{d}\right)}{M_i}$.  Thus we can improve the upper bound \ref{eq:firstlcmbound} to 
\begin{align*}
    \lim_{n\to \infty}\left(\# S_{\rm lcm}(n) \right)^{1/n} &\leq  \exp\left(\sum_{i=1}^{\infty}  \sum_{d'\mid M_i} \left(\frac{\varphi\left(\frac{M_i}{d}\right)}{i(i{+}1)M_i}\log\left( 1+\frac{T_{i,d'}^2 - T_{i,d'}}{2}\right) \right)\right). 
\end{align*}
We first numerically compute the value of this sum for $1\leq i \leq 10000$.  Computing the inner sum over $d'\mid M_i$ quickly becomes unwieldy as $i$ gets large, however. So for $i\geq 42$ we only compute it over the restricted range $d'\leq 10000$.  For all of the uncomputed values we use the same bound $T_{i,d'}\leq \Phi_i$ as before.  We similarly use this bound for $10000<i\leq 10^7$, and numerically estimate the tail $i>10^7$, giving the bound $c_l \leq 4.25724$. 

Finally, we can numerically compute exact values of $f(d,t)$, as we did in the divisor graph case above.  Doing so for pairs $(i,d)$ with $id<50000$ and all $t \in [id,(i+1)d)$ for which the connected component of $v_d$ in  $\mathcal{L}_{[d,t]}$ had at most 44 vertices\footnote{Computing matrix permanents was slower in this case because the matrices were more dense, which necessitated a smaller maximum matrix size.} gives the improved bound $c_l \leq 3.36352 $. 

To compute matrix permanents, a parallel algorithm \cite{kaya}, optimized for sparse binary matrices, was used. The code provided by the author of that paper was modified to use exact large integer arithmetic instead of floating point arithmetic.

\section{Numerical Lower Bounds} \label{sec:lowbds}
As mentioned above, Theorems \ref{thm:divlimit} and \ref{thm:lcmlimit} give a method to compute lower bounds for $c_d$ and $c_l$ by explicitly computing partial sums of the series given for each constant, but the convergence is quite slow.  Using the same range of values used to obtain the upper bounds in the previous section (all pairs $(i,d)$ with $id<50000$ and all $t \in [id,(i+1)d)$ for which the connected component of $v_d$ in  $\mathcal{D}_{[d,t]}$ had at most 50 vertices (or in the case of $c_l$, the connected component of $\mathcal{L}_{[d,t]}$ had at most 44 vertices) gives lower bounds $1.70584 < c_d$, and $1.81576< c_l$, both substantially worse than the bounds $1.9364<c_d$ and $2.1335<c_l$ obtained by Pomerance in \cite{pom2}.

Pomerance obtains his bounds as follows. For an integer $b$, write the divisors $a_i\mid b$, as $1=a_1<a_2<\cdots a_k=b$, where $k=\tau(b)$.  Write $S_{\rm div}\left(\{a_1,a_2 \ldots a_i\}\right)$ for the permutations of the set $\{a_1,a_2 \ldots a_i\}$ satisfying the usual divisibility requirements. (Similarly for $S_{\rm lcm}(\{a_1,a_2 \ldots a_i\})$ in which case we impose the condition $\lcm(a_j,\pi(a_j))\leq a_i$, where $a_i$ is the largest element in the set.)  Then define 
\[\beta_{\rm div}(b)\coloneqq\frac{\log(\#S_{\rm div}(\{a_1,a_2, \ldots, a_k\})}{b} + \sum_{i=1}^{k-1}\left(\frac{1}{a_i} - \frac{1}{a_{i+1}}\right)\log(\#S_{\rm div}(\{a_1,a_2, \ldots, a_i\})\]
and 
\[\beta_{\rm lcm}(b)\!\coloneqq\frac{\log(\#S_{\rm lcm}(\{a_1,a_2, \ldots, a_k\})}{b} + \sum_{i=1}^{k-1}\left(\frac{1}{a_i} - \frac{1}{a_{i+1}}\right)\log(\#S_{\rm lcm}(\{a_1,a_2, \ldots, a_i\}).\]
Note that $\#S_{\rm lcm}(\{a_1,a_2, \ldots, a_k\}) = k!$, since $\lcm(a_i,a_j)\leq a_k = b$ for all $i$, $j$.  Finally, define $\alpha(p^i)$ on prime powers by 
\[\alpha(p^i) = \frac{p^{i+1}-p^i}{p^{i+1}-1} \]
and extend $\alpha$ to a multiplicative function on the positive integers. With this definition, $\alpha(b)$ gives the density of the integers $j$ satisfying the condition $v_p(j) \equiv 0 \pmod{ v_p(b)+1}$ for every prime $p$.  Pomerance then shows that for any integer $b$ (as $n\to \infty$) both \begin{equation}
    \#S_{\rm div}(n) \geq \exp\left(\beta_{\rm div}(b)\alpha(b)n+o(n)\right)\label{eq:divlbd}
\end{equation}
and
\begin{equation}\#S_{\rm lcm}(n) \geq \exp\left(\beta_{\rm lcm}(b)\alpha(b)n+o(n)\right).\label{eq:lcmlbd}
\end{equation}

\begin{table}[h!]
\footnotesize
    \centering
    \begin{tabular}{c c c c c}
    $b$     &   $\beta_{\rm div}(b)\alpha(b)$ & $e^{\beta_{\rm div}(b)\alpha(b)}$ & $\beta_{\rm lcm}(b)\alpha(b)$ & $e^{\beta_{\rm lcm}(b)\alpha(b)}$\\
    \hline
24 & 0.542689 & 1.720627 & 0.602065  &  1.825886\\
48 & 0.578121 & 1.782686 & 0.638299  &  1.893259\\
60 & 0.646855 & 1.909527 & 0.646855 & 1.909527\\
72 & 0.598295 & 1.819015 & 0.670619  &  1.955447\\
120 & 0.610358 & 1.841091 & 0.707611 & 2.029138\\
144 & 0.631752 & 1.880903 & 0.704928  &  2.023701\\
180 & 0.710735 & 2.035488 & 0.710735 & 2.035488\\
240 & 0.642829 & 1.901853 & 0.740126 & 2.096201\\
288 & 0.650370 & 1.916251 & 0.723606  &  2.061856\\
360 & 0.660646 & 1.936043 & 0.769250 & 2.158148\\
432 & 0.654328 & 1.923851 & 0.730695  &  2.076524\\
480 & 0.660864 & 1.936465 & 0.757764 & 2.133502\\
576 & 0.660597 & 1.935949 & 0.733726  &  2.082827\\
720 & 0.691601 & 1.996910 & 0.800104 & 2.225772\\
864 & 0.672306 & 1.958749 & 0.748644  &  2.114132\\
1440 & 0.708710 & 2.031369 & 0.816672 & 2.262957\\
1728 & 0.682146 & 1.978118 & 0.758293  &  2.134630\\
2160 & 0.711797 & 2.037650 & 0.822675 & 2.276582\\
3456 & 0.687472 & 1.988683 & 0.763452  &  2.145671\\
5184 & 0.690644 & 1.995001 & 0.767521  &  2.154419\\
10368 & 0.695844 & 2.005402 & 0.772521  &  2.165219\\

    \end{tabular}
    \caption{Values of $\beta_{\rm div}(b)\alpha(b)$ and $\beta_{\rm lcm}(b)\alpha(b)$ computed for 3- and 5-smooth values of $b$.  }
    \label{tab:bvals}
\end{table}

The bounds above are obtained by taking $b=480$.  We can first improve the bounds by extending the computations to larger values of $b$ as given in Table \ref{tab:bvals}.  By using the values obtained for $b=2160=2^4\times3^3\times 5$, we can improve the lower bounds to $2.037650<c_d$ and $2.276582<c_l$.

We can further improve these bounds by combining these ideas (for small primes) with exact computations for the large primes. An inspection of the proof in \cite{pom2} shows that \eqref{eq:divlbd} and \eqref{eq:lcmlbd} are in fact lower bounds for ``smooth'' versions of $\#S_{\rm div}(n)$ and $\#S_{\rm lcm}(n)$ in the following sense.  

Let $S_{\rm div}(n,s)\subseteq S_{\rm div}(n)$ be the set of permutations $\pi$ of $n$ such that for each $i$, either $i\mid \pi(i)$, or $\pi(i)\mid i$ and, furthermore, the quotient is $s$-smooth (meaning that $P^+\left(\tfrac{i}{\pi(i))}\right)\leq s$ or $P^+\left(\frac{\pi(i)}{i}\right)\leq s$, whichever is an integer).  In the same way, let $S_{\rm lcm}(n,s)\subseteq S_{\rm lcm}(n)$ be the permutations of $n$ where for each $i$, $\lcm(i,\pi(i))\leq n$ and also $P^+\left(\frac{\lcm(i,\pi(i))}{\gcd(i,\pi(i))}\right) \leq s$. Then we have
\begin{equation}
    \#S_{\rm div}(n,P^+(b)) \geq \exp\left(\beta_{\rm div}(b)\alpha(b)n+o(n)\right) \label{eq:smoothbbd}
\end{equation}
and
\begin{equation*}\#S_{\rm lcm}(n,P^+(b)) \geq \exp\left(\beta_{\rm lcm}(b)\alpha(b)n+o(n)\right).
\end{equation*}

We can then improve the lower bounds for $\#S_{\rm div}(n)$ and $\#S_{\rm lcm}(n)$ by obtaining lower bounds for each of the ratios $\frac{\#S_{\rm div}(n)}{\#S_{\rm div}(n,s)}$ and $\frac{\#S_{\rm lcm}(n)}{\#S_{\rm lcm}(n,s)}$.  
This can be done through explicit computation, similar to the methods introduced in previous sections. Fix a smoothness bound $s$ and let $\mathcal{R}_s \subset [1,n]$ be the set of $s$-rough integers up to $n$, those integers free of prime factors less than or equal to $s$.  Then, for an integer $r \in \mathcal{R}_s$ let $M_{r,n,s} = \{i \leq n: r\mid i \text{ and } P^+(\frac{i}{r})\leq s\}$  be the set of $s$-smooth multiples of $r$ up to $n$.  Note that we can then write
\begin{align*}
    \# S_{\rm div}(n,s) = \prod_{\substack{r \in \mathcal{R}_s}} \# S_{\rm div}(M_{r,n,s})
\end{align*}
since permutations in $S_{\rm div}(n,s)$ only permute integers within the sets $M_{r,n,s}$.

In a similar way, if $A_1, A_2, \ldots A_k$ are any disjoint subsets of $[1,n]$, then \[\# S_{\rm div}(n) \geq \prod_{i=1}^k \# S_{\rm div}(A_i).\]

Combining these two observations, if $D_1, D_2, \ldots, D_k$ are any disjoint sets that partition $\mathcal{R}_s$ then 
\begin{align}
    \frac{\# S_{\rm div}(n)}{\#S_{\rm div}(n,s)} = \frac{\# S_{\rm div}(n)}{\prod_{i=1}^k \prod_{r \in D_i}\# S_{\rm div}(M_{r,n,s})} \geq \prod_{i=1}^k \frac{\# S_{\rm div}\left(\bigcup_{r \in D_i}M_{r,n,s}\right)}{ \prod_{r \in D_i}\# S_{\rm div}(M_{r,n,s})}  \label{eq:smoothratbd}
\end{align}
since each of the sets $M_{r,n,s}$ are disjoint and partition the interval $[1,n]$. Furthermore, each of the terms in the rightmost product above are greater than or equal to 1 since the permutations enumerated by the denominator are a subset of those counted by the numerator.  So, multiplying over a set of disjoint $D_i$ which don't necessarily partition $\mathcal{R}_s$, we still obtain a lower bound for the ratio.

We now use the following strategy to pick sets $D_i$.  Consider $\mathcal{D}_{\mathcal{R}_s}$ the restriction of the divisor graph to the set $\mathcal{R}_s$. For an integer $a\in \mathcal{R}_s$, take $D_{a,n}$ to be the integers in the connected component of $v_a$ in $\mathcal{D}_{\mathcal{R}_s\cap [a,n]}$ (further restricting to integers greater than or equal to $a$).   Now we take $A_s$ to be the set of integers $a\in \mathcal{R}_s$ satisfying:

\begin{itemize}
    \item $|\bigcup_{r \in D_{a,n}} M_{r,n,s} | \leq 50$
    \item If $i = \left\lfloor \frac{n}{a} \right\rfloor$ and $d$ is the largest $i$-smooth divisor of $a$, then $id<50000$.
    \item $D_{a,n}$ is disjoint from any $D_{b,n}$ with $b<a$ that satisfies the first two properties.

\end{itemize}

We note that in this setup with $i = \left\lfloor \frac{n}{a} \right\rfloor$, $d$ the largest $i$-smooth divisor of $a$ and $t= \left\lfloor \frac{n d}{a}\right\rfloor$, the set  $\bigcup_{r \in D_{a,n}} M_{r,n,s}$ has the same exact multiplicative structure as $\bigcup_{r \in D_{d,t}} M_{r,t,s}$.  So, using the same techniques as previous sections, we can compute 

\begin{align}
 \lim_{n \to \infty}& \left(\prod_{a \in A_S} \frac{\# S_{\rm div}\left(\bigcup_{r \in D_a}M_{r,n,s}\right)}{ \prod_{r \in D_{d,t}}\# S_{\rm div}(M_{r,n,s})}\right)^{1/n} \nonumber \\
 &\hspace{-2mm}=\hspace{-1mm}\prod_{\substack{i,d,t  \\ 1\leq d \leq \frac{50000}{i}\\ d \in \mathcal{R}_s, \ P^+(d)\leq i\\ t \in [id,(i+1)d)\\ \left|\parbox{1cm}{$ \bigcup\limits_{r \in D_{d,t}}$} M_{r,t,s} \right| \leq 50}} \hspace{-3mm}\left(\!\frac{\# S_{\rm div}\!\left(\parbox{1.85cm}{$\bigcup\limits_{r \in D_{d,t}}\!M_{r,t,s}$}\right)}{ \prod\limits_{r \in D_{d,t}}\!\# S_{\rm div}(M_{r,t,s})} \!\Bigg/ \frac{\# S_{\rm div}\!\left(\parbox{1.95cm}{$\bigcup\limits_{\substack{r \in D_{d,t} \\ r>d}}M_{r,t,s}$}\right)}{ \prod\limits_{\substack{r \in D_{d,t}\\r>d}}\!\# S_{\rm div}(M_{r,t,s})} \right)^{\hspace{-1mm}\frac{1}{t(t+1)}\prod\limits_{p\leq i}\frac{p-1}{p}}\hspace{-1mm}. \label{eq:smoothratioapprox}
\end{align}

Setting $s=3$ we can evaluate the expression above exactly and find that it evaluates to 0.031282\ldots. Using this value in \eqref{eq:smoothratbd}, along with the bound for $\#S_{\rm div}(n,3)$ obtained using $b=10368$ (see Table \ref{tab:bvals}) we compute that \[c_d \geq \exp(0.695844+0.031282) = 2.06912.\]
We can also evaluate the expression in \eqref{eq:smoothratioapprox} when $s=5$, in which case we obtain the value  0.00905\ldots.  However using this, along with the value of $\beta_{\rm div}(b)\alpha(b)$ for $b=2160$, the largest 5-smooth $b$-value we were able to compute, does not result in a better lower bound for $c_d$.

Following the same strategy for $\# S_{\rm lcm}(n)$, we take $L_b$ to be the integers in the connected component of $v_b$ in $\mathcal{L}_{\mathcal{R}_s\cap [b,n]}$, and $B_s$ the set of integers $b\in \mathcal{R}_s$ satisfying analogous conditions as $A_s$, but with the size restricted to 44 instead of 50 in the first condition.  We then compute 
\begin{align*}
    \lim_{n \to \infty} \left(\prod_{a \in B_3} \frac{\# S_{\rm lcm}\left(\bigcup_{d \in L_a}M_{d,n,s}\right)}{ \prod_{d \in L_a}\# S_{\rm lcm}(M_{d,n,s})}\right)^{1/n} &= 0.040298\ldots, \\
    \lim_{n \to \infty} \left(\prod_{a \in B_5} \frac{\# S_{\rm lcm}\left(\bigcup_{d \in L_a}M_{d,n,s}\right)}{ \prod_{d \in L_a}\# S_{\rm lcm}(M_{d,n,s})}\right)^{1/n} &= 0.010828\ldots.
\end{align*}
Comparing these to the values in Table \ref{tab:bvals}, we find that this time around the best lower bound is obtained using $s=5$ and $b=2160$, in which case we obtain \[c_l \geq \exp(0.822675+0.010828) > 2.30136. \]


It isn't clear whether these methods can be used to improve the lower bound $c_\ell/c_d > 1.00057$ obtained in \cite{pom2}.  

\begin{question}
Can the lower bound for $c_\ell/c_d$ be improved?
\end{question}

\section{Final remarks on Theorem \ref{thm:graphbd}} \label{sec:question}
The bound obtained in Theorem \ref{thm:graphbd} does not seem to be sharp for any $d>2$.  For example, among all graphs $G$ of order at most 10 and vertices $v \in G$ of degree 3 (counting the loop on $v$), the bound $R(G,v)\leq 3$ holds (compared to the bound $R(G,v)\leq 4$ given by the theorem).  $R(G,v)=3$ when $G=K_3$, with loops added to each vertex.  Among graphs of order at most 10 and vertices of degree 4 we have $R(G,v)\leq \frac{19}{4}$ (compared to 7), achieved when $G=K_{2,3}$ with loops added to each vertex (and $v$ being one of the two vertices of degree 4).

The statement of Theorem \ref{thm:graphbd} is reminiscent of Bregman's inequality, which states, in the notation of this paper, that \[C(G) \leq \prod_{v \in V(G)} (d(v)!)^{1/d(v)}.\]  This comparison might lead one to conjecture that $R(G,v)=O(d(v))$, however this is false: Fixing $d>k>1$ and letting $v$ be one of the $k$ vertices of degree $d+1$ in the complete bipartite graph $K_{d,k}$ (with loops added to each vertex), we find that \begin{align*}
    R(K_{d,k},v) = \frac{C(K_{d,k})}{C(K_{d,k-1})} = \frac{\sum_{i=0}^k \binom{k}{i} \frac{d!(k-i)!}{(d-k+i)!}}{\sum_{i=0}^{k-1} \binom{k-1}{i} \frac{d!(k-1-i)!}{(d-k+i+1)!}}.
\end{align*}
Taking $k=\frac{d}{2}$, we find that the expression above simplifies to $\frac{d^2}{4}+O(d)$.  So the bound in Theorem \ref{thm:graphbd} has the optimal order of growth, though it is not clear what the optimal constant in such a bound would be.

\begin{question}
What is the best possible constant $c$ in a bound of the form $ R(G,v) \leq cd^2(1+o(1))$ as $d(v) \to \infty$? 
\end{question}

From the example above and Theorem 2.1, we know that the optimal constant is between $\frac{1}{4}$ and $\frac{1}{2}$.  It would also be of interest to determine, for fixed degrees $d$, the best upper bound of this sort.  

\section*{Acknowledgments}
The author is grateful to Carl Pomerance for helpful comments and discussions related to this paper, and to the anonymous referee for a careful reading of the manuscript and helpful suggestions.  
\renewcommand{\eprint}[1]{\href{https://arxiv.org/abs/#1}{arXiv:#1}}
\bibliographystyle{amsplain}
\bibliography{refs}

\appendix
\section{Table of values of $\# S_{\rm div}(n)$ and  $\# S_{\rm lcm}(n)$}
\begin{table}[H]
\vspace{-3mm}
\scriptsize
    \centering
\begin{tabular}{r r c r c}
$n$ & $\# S_{\rm div}(n)$ & $(\# S_{\rm div}(n))^{1/n}$ &  $\# S_{\rm lcm}(n)$ & $(\# S_{\rm lcm}(n))^{1/n}$\\
\hline

1  & 1                  & 1.000000 & 1                    & 1.000000 \\
2  & 2                  & 1.414214 & 2                    & 1.414214 \\
3  & 3                  & 1.442250 & 3                    & 1.442250 \\
4  & 8                  & 1.681793 & 8                    & 1.681793 \\
5  & 10                 & 1.584893 & 10                   & 1.584893 \\
6  & 36                 & 1.817121 & 56                   & 1.955981 \\
7  & 41                 & 1.699799 & 64                   & 1.811447 \\
8  & 132                & 1.841076 & 192                  & 1.929357 \\
9  & 250                & 1.846876 & 332                  & 1.906016 \\
10 & 700                & 1.925351 & 1184                 & 2.029248 \\
11 & 750                & 1.825447 & 1264                 & 1.914155 \\
12 & 4010               & 1.996467 & 12192                & 2.190313 \\
13 & 4237               & 1.901098 & 12872                & 2.070745 \\
14 & 10680              & 1.939792 & 37568                & 2.122134 \\
15 & 24679              & 1.962555 & 100836               & 2.155631 \\
16 & 87328              & 2.036208 & 311760               & 2.204772 \\
17 & 90478              & 1.956867 & 322320               & 2.108710 \\
18 & 435812             & 2.057285 & 2338368              & 2.258544 \\
19 & 449586             & 1.983885 & 2408848              & 2.167129 \\
20 & 1939684            & 2.062465 & 14433408             & 2.280176 \\
21 & 3853278            & 2.058785 & 32058912             & 2.277331 \\
22 & 8650900            & 2.066907 & 76931008             & 2.282754 \\
23 & 8840110            & 2.004564 & 78528704             & 2.204256 \\
24 & 60035322           & 2.109115 & 919469408            & 2.363092 \\
25 & 80605209           & 2.071355 & 1158792224           & 2.304413 \\
26 & 177211024          & 2.076107 & 2689828672           & 2.305057 \\
27 & 368759752          & 2.076284 & 4675217824           & 2.281082 \\
28 & 1380348224         & 2.120451 & 21679173184          & 2.339615 \\
29 & 1401414640         & 2.067278 & 21984820864          & 2.273133 \\
30 & 8892787136         & 2.146024 & 381078324992         & 2.432393 \\
31 & 9014369784         & 2.094725 & 386159441600         & 2.364649 \\
32 & 33923638848        & 2.133429 & 1202247415040        & 2.385063 \\
33 & 59455553072        & 2.120756 & 2207841138624        & 2.366245 \\
34 & 126536289568       & 2.120976 & 4860086689536        & 2.361221 \\
35 & 207587882368       & 2.105468 & 8681783534848        & 2.342473 \\
36 & 1495526775088      & 2.178656 & 112777175188224      & 2.456628 \\
37 & 1510769105288      & 2.133868 & 113878087417856      & 2.398302 \\
38 & 3187980614208      & 2.133241 & 247857779387904      & 2.392185 \\
39 & 5415462995568      & 2.120818 & 437979951107072      & 2.373679 \\
40 & 29811240618112     & 2.171998 & 3191130554148864     & 2.441173 \\
41 & 30071845984896     & 2.131747 & 3217753817425920     & 2.389093 \\
42 & 167426899579520    & 2.181034 & 40769431338324480    & 2.485903 \\
43 & 168778036632608    & 2.142238 & 41092863780506112    & 2.434258 \\
44 & 543720217208896    & 2.162195 & 148296367650710016   & 2.456139 \\
45 & 1741288345700048   & 2.181152 & 512674391975854336   & 2.474868 \\
46 & 3618889806595872   & 2.178863 & 1089866926717622272  & 2.466703 \\
47 & 3643985571635136   & 2.143371 & 1097208951955834368  & 2.420115 \\
48 & 28167109438114448  & 2.201420 & 13688883937198881792 & 2.504231 \\
49 & 33158989380172192  & 2.173477 & 15603885890357429248 & 2.464329 \\
50 & 107833432035711440 & 2.191064 & 58892187478016638976 & 2.485428
\end{tabular}
\vspace{-2mm}

    \caption{Values of $\# S_{\rm div}(n)$ and  $\# S_{\rm lcm}(n)$ up to $n=50$.}
    \label{tab:exactvals}
\end{table}
\addresseshere

\end{document}